\documentclass[12 pt]{extarticle}
\usepackage[utf8]{inputenc}
\usepackage[letterpaper, total={7in, 9in}]{geometry}

\usepackage[normalem]{ulem}

\makeatletter
\newcommand{\customlabel}[2]{#2\def\@currentlabel{#2}\label{#1}}

\makeatother

\usepackage[backend=biber, style=trad-abbrv, sorting=nyt, maxnames=100,backref=true, giveninits=true, sortcites=true, url=false, doi=false]{biblatex}
\usepackage{amsmath}
\usepackage{amsthm}
\usepackage{amssymb}
\usepackage{mathtools}
\usepackage[dvipsnames]{xcolor}
\usepackage[hidelinks]{hyperref}
\usepackage{enumitem}
\usepackage{cleveref}
\usepackage{tikz}
\usetikzlibrary {graphs} 
\usepackage{caption}

\def\eps{\varepsilon}

\newtheorem{thm}{Theorem}[section]

\newtheorem{quest}[thm]{Question}
\newtheorem{prop}[thm]{Proposition}
\newtheorem{claim}{Claim}
\newtheorem{lemma}[thm]{Lemma}
\newtheorem{cor}[thm]{Corollary}

\hypersetup{colorlinks=true,citecolor=blue,linktoc=page}
\hypersetup{%
  colorlinks,
  linkcolor={red!50!black},
  citecolor={green!50!black},
  urlcolor={blue!50!black}
}

\title{Even smaller universal posets}

\author{
{J\'ozsef Balogh\thanks{Department of Mathematics, University of Illinois Urbana-Champaign, Urbana, IL, USA, and Extremal Combinatorics and Probability Group (ECOPRO), Institute for Basic Science (IBS), Daejeon, South Korea. Email: \texttt{jobal@illinois.edu}. Partially supported by NSF grants RTG DMS-1937241, FRG DMS-2152488, UIUC  Campus Research Board Award RB26026, the  Simons Collaboration grant [SFI-MPS-TSM-00013107, JB], and the Institute for Basic Science (IBS-R029-C4).}
}, Ramon I. Garcia\thanks{Department of Mathematics, University of Illinois Urbana-champaign, Urbana, IL, USA. Email: \texttt{rig2@illinois.edu}. Partially suported by NSF grant RTG DMS-1937241, FRG DMS-2152488 and the R.H. Schark Fellowship.}, 
Marcelo Sales \thanks{Department of Mathematics, University of California, Irivne, CA, USA. Email:\texttt{mtsales@uci.edu}. Supported by US Air force grant FA9550-23-1-0298.
}}

\date{}

\begin{document}

\maketitle

\begin{abstract}
We show that for every $\eta>0$ and sufficiently large $n$, there exists a poset of size $2^{(1+\eta)n/2}$ containing all the $n$-element posets as induced subposets. This improves a recent result of Bastide, Groenland and Nenadov~\cite{BastidePosets2025}.

 Our proof provides  a labeling scheme preserving transitivity, inspired by the Boolean lattice. Among other tools, we use the Szemer\'edi Regularity Lemma.
\end{abstract}

\section{Introduction}

A partially ordered set, or \emph{poset}, is a set equipped with a reflexive, antisymmetric, and transitive relation. Throughout the paper, containment of posets is always meant in the induced sense. More precisely, if $P$ and $Q$ are posets, an injective map $\psi:P\to Q$ is an \emph{order embedding} if, for all $x,y\in P$,
\begin{align*}
        x\le_P y \quad\quad\quad \text{if and only if}\quad\quad\quad \psi(x)\le_Q \psi(y).
\end{align*}
Given a family $\mathcal P$ of posets, we say that a poset $Q$ is \emph{universal} for $\mathcal P$ if every $P\in\mathcal P$ admits an order embedding into $Q$.

Universal objects are a classical topic in combinatorics. For graphs, the question was first studied in the infinite setting. Rado~\cite{Rado1964} constructed a countably infinite graph, now known as the \emph{Rado graph}, which contains every countable graph as an induced subgraph. In the finite setting, Moon~\cite{Moon1965} asked the quantitative question of determining the smallest size $g(n)$ of a universal graph containing every graph on $n$ vertices as an induced subgraph. In~\cite{Moon1965}, it was shown that $g(n)=O(n2^{n/2})$, while a simple counting argument gives $g(n)\geq 2^{(n-1)/2}$. This was later improved by several authors and finally settled by Alon~\cite{Noga2017asymptotically}, who showed that $g(n)=(1+o(1))2^{(n-1)/2}$. More generally, there is a vast literature on the asymptotics of the minimum possible number of vertices in a universal graph for a given family of $n$-vertex graphs or digraphs; see, for instance,
\cites{BollobasThomason1981,Chung1990,KannanNaorRudich1992,AlstrupKaplanThorupZwick2015,AlonNenadov2019,DujmovicEsperetGavoilleJoretMicekMorin2021,bonamy2021optimal}.

For posets, the analogous question goes back at least as far. The existence of a countably infinite universal poset containing every countable poset as an induced subposet was proved in several early works; see \cites{Johnston1956,Jonsson1956,Fraisse1986}. In this paper, we focus on quantitative bounds in the finite setting. In particular, we are interested in the following question, raised explicitly by Hamkins~\cite{Hamkins2010}. Let $\mathcal P_n$ denote the family of all posets with $n$ elements, and let
\begin{align*}
        f(n)=\min\{|Q|: Q \text{ is universal for }\mathcal P_n\}
\end{align*}
be the minimum size of a universal poset for this family.

\begin{quest}\label{question}
    What is the asymptotic value of $f(n)$?
\end{quest}

The best known lower bound for Question~\ref{question} comes from a simple counting argument. A classical result of Kleitman and Rothschild~\cite{Kleitman1975Posets} shows that there are $|\mathcal{P}_n|=2^{(1+o(1))n^2/4}$ distinct $n$-element posets. Hence, a universal poset $Q$ must satisfy $\binom{|Q|}{n}\geq 2^{(1+o(1))n^2/4}$, which gives the lower bound $f(n)\geq 2^{(1+o(1))n/4}$. The most elementary upper bound is $f(n)\leq 2^n$, obtained by noting that the Boolean lattice $(2^{[n]},\subseteq )$ is universal for $\mathcal{P}_n$. Indeed, every $n$-element poset $P$ embeds into the Boolean lattice $(2^P,\subseteq)$ by sending an element $x$ to its closed down-set $D_x=\{y\in P:y\le_P x\}$.

There is some evidence that the lower bound $2^{(1+o(1))n/4}$ should be the correct order of magnitude. Given a poset $P$, its \emph{comparability graph} $G_P$ is the graph with vertex set $P$, in which two distinct elements are adjacent if and only if they are comparable in $P$. If $Q$ is universal for all $n$-element posets, then $G_Q$ contains, as an induced subgraph, the comparability graph of every $n$-element poset. Thus the problem of finding small universal posets is closely related to the problem of finding small induced-universal graphs for the class of comparability graphs. Bonamy, Esperet, Groenland, and Scott~\cite{bonamy2021optimal} proved that the class of $n$-vertex comparability graphs admits an induced-universal graph on $2^{(1+o(1))n/4}$ vertices, and that this is optimal up to the lower-order term. Although this result suggests that the exponent $n/4$ is the natural target for universal posets, as already mentioned in their work, the additional difficulty in the poset setting is that a universal graph for comparability graphs need not itself be the comparability graph of a universal poset: one must encode not only which pairs are comparable, but also the orientation of each comparison in a way that is globally transitive.

The first exponential improvement over the Boolean-lattice upper bound was due to Bastide, Groenland, and Nenadov~\cite{BastidePosets2025}, who showed that there exists a subposet of the Boolean lattice of size at most $2^{(1+o(1))2n/3}$ which is universal for $\mathcal{P}_n$. Our main result improves this bound further.

\begin{thm}\label{thm:universal-bound}
For every $\eta \in (0,1)$, there exists $n_0:=n_0(\eta)$ such that for $n\geq n_0$ there exists a poset $Q_n$ which is universal for $\mathcal P_n$ of size
\begin{align*}
    |Q_n|\leq 2^{n/2+\eta n}.
\end{align*}
\end{thm}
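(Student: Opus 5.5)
We construct $Q_n$ as a subposet of a Boolean-type lattice and assign each $n$-element poset $P$ a labeling $x\mapsto L(x)$ such that $x\le_P y$ iff $L(x)\subseteq L(y)$, with every label drawn from a fixed family of size $2^{(1/2+\eta)n}$. Taking $Q_n$ to be the union of all possible labels under inclusion then proves the result, because inclusion is automatically transitive. The starting point is the down-set embedding $x\mapsto D_x$. It costs $2^n$ because a label is an arbitrary subset of $P$. The plan is to compress each label to roughly $n/2$ free bits, with the rest determined by a short "type" description. The type part must contribute only $2^{o(n)}$ choices, or at most $2^{\eta n}$ choices in total.

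First, we apply the Szemer\'edi Regularity Lemma to the comparability graph $G_P$, or rather to the directed bipartite structure "$x<y$", with parameter $\eps=\eps(\eta)$. This gives an equipartition $V_1,\dots,V_k$ with $k\le K(\eps)$ in which almost all pairs are $\eps$-regular. We record the reduced data: pair densities rounded to multiples of $\eps$, together with the positions of a linear extension. This costs $2^{O(k^2\log(1/\eps))}+n^{O(k)}=2^{o(n)}$ choices and forms part of every label.

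Second, we fix a linear extension $x_1,\dots,x_n$ of $P$. The down-set of $x_i$ is contained in $\{x_1,\dots,x_{i-1}\}$ and the up-set in $\{x_{i+1},\dots,x_n\}$. We encode $x_i$ by a pair $(A_i,B_i)$. Here $A_i$ is the down-set of $x_i$ intersected with the "first half" of the labels relevant to $x_i$, and $B_i$ is the complement of the up-set restricted to the other half. We order pairs by $(A,B)\le(A',B')$ iff $A\subseteq A'$ and $B\subseteq B'$. This is the standard trick behind the $n/2$ bound for graphs (Moon/Alon), where each vertex stores its adjacencies to only half the vertices. In a poset, the relation between $x_i$ and $x_j$ with $i<j$ is read off either from $x_j$'s down-part or from $x_i$'s up-part, so each element needs only about $n/2$ bits.

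The main obstacle is transitivity. Coordinatewise inclusion of such compressed labels must reproduce exactly $\le_P$, including not creating spurious comparabilities. Naive halving fails because $A_i\subseteq A_j$ need not hold when $x_i<x_j$, since $x_i$ may store information that $x_j$ omits. We address this by making the "which half" decision depend only on the regularity cluster and the linear-extension position. For $x_i<x_j$, we close the stored sets downward, so that $A_i\subseteq A_j$ and $B_i\subseteq B_j$ hold automatically. For incomparable pairs we need a witness coordinate separating them, and the regularity structure is what keeps the extra witness coordinates few. In regular pairs of density near 0 or 1, incomparability or comparability is nearly determined by the clusters, so only $\eps n$ exceptional coordinates per element must be stored explicitly. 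These cost $\binom{n}{\eps n}\le 2^{\eta n/2}$. Dense irregular mixed pairs are handled by the half-storage scheme. Counting all labels gives $2^{n/2}\cdot 2^{\eta n/2}\cdot 2^{o(n)}\le 2^{n/2+\eta n}$ for $n\ge n_0(\eta)$. The real work lies in checking the downward-closure step: no incomparable pair may become comparable after closure. We would verify this first, cluster pair by cluster pair.
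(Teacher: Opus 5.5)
You correctly single out transitivity as the main obstacle, but the proposal does not overcome it, and this is a genuine gap. Moon's halving decides, pair by pair, which endpoint stores the bit. A relation read off from a single endpoint is in general not transitive across labels coming from different posets, and letting that choice depend only on the cluster and the linear-extension position does not change this. Your repair is to close the stored sets downward so that $A_i\subseteq A_j$ whenever $x_i<x_j$. This forces $A_j\supseteq\bigcup_{x_i<x_j}A_i$, which is no longer confined to the half assigned to $x_j$; if each element's half contains the element itself, it is the whole down-set of $x_j$. So nothing supports the $2^{n/2}$ count any more. The direction you defer, that no incomparable pair becomes comparable, is exactly where inclusion orders on partial information break. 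Already with one global split $H$, two elements $x\notin H$, $y\in H$ of an antichain receive $(\emptyset,H^{c}\setminus\{x\})\le(\{y\},H^{c})$.

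The paper does not halve per element at all. It chooses a single set $S=\varphi(P)$, which takes at most $2^{\eta n/5}$ values over all $P$. Elements of $S$ store only the traces of their closed up- and down-sets on $S$, while elements outside $S$ store their full closed up- and down-sets. Two labels are comparable only if they carry the same $S$ and their traces on $S$ are nested. When both elements lie outside $S$, the paper additionally requires either that their full sets are nested or that there is a witness $s\in U\cap D'\cap S$. The witness clause is what rescues transitivity when the middle element lies in $S$, and the resulting relation is not an inclusion order on the stored data.

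The second missing idea is the source of the factor $1/2$, which your proposal never justifies. The paper needs $|S|\le n/2+O(n/\sqrt{k})$ and, for $x\notin S$, only $2^{n/2+O(\eta n)}$ possible full profiles given the template. This comes from the fact that comparability graphs contain no induced copy of $F$, a triangle with a pendant edge at each vertex. By the Alon--Balogh--Bollob\'as--Morris embedding lemma, the reduced graph of $\varepsilon$-regular pairs with density in $[\delta,1-\delta]$ is therefore triangle-free. Either every cluster has grey degree below $t/2$, or the grey neighbourhood of a maximum-degree cluster is independent. In the latter case the at most $t/2$ clusters outside it (at most $n/2$ elements) go into $S$, leaving every remaining cluster with grey degree at most $t/2$.

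The paper also starts from an initial partition into consecutive blocks of a layering, which fixes the direction of every comparability between distinct blocks. With this, each grey neighbour costs an element outside $S$ one bit, and each element of $S$ outside its own block costs an element of $S$ one bit. Finally, regularity controls exceptions only on average, not by $\varepsilon n$ per element. Irregular pairs, pairs inside a block and the exceptional set carry no usable structure, and half storage does not help with them. The paper collects all of these into an auxiliary graph with at most $2n^2/k$ edges and adds its vertices of degree at least $2n/\sqrt{k}$ to $S$.
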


In particular, Theorem~\ref{thm:universal-bound} implies that $f(n)\leq 2^{(1+o(1))n/2}$ for sufficiently large $n$. Our proof takes a different route from that of~\cite{BastidePosets2025}. Instead of finding a suitable subposet of the Boolean lattice $(2^{[n]}, \subseteq)$, we analyze the structure of the comparability graph $G_P$ of a poset $P$ and convert an adjacency labeling scheme for $G_P$ into a labeling scheme that preserves transitivity in $P$. Our approach is similar to that of~\cite{bonamy2021optimal} in that we use the regularity lemma to classify the pairs in the reduced graph of $G_P$ according to their densities. The main new contribution is a labeling scheme preserving transitivity, inspired by the Boolean lattice; see Section~\ref{sec: poset construction}.

\medskip
\noindent\textbf{Organization and notation.}
The paper is organized as follows. Section~\ref{sec: poset construction} introduces the labeling scheme used in the proof of Theorem~\ref{thm:universal-bound}. In Section~\ref{reglemma}, we state the Szemer\'edi Regularity Lemma and prove the structural results needed for the proof, while Section~\ref{sec:proof} contains the proof of Theorem~\ref{thm:universal-bound}.

For a poset $Q$ and $A\subseteq Q$, we write $Q[A]$ for the subposet induced by $A$, i.e., the poset with ground set $A$ and order relation inherited from $Q$. A poset $Q$ is $k$-\emph{layered} if its ground set admits a partition $Q=L_1\cup\cdots\cup L_k$ such that whenever $u\in L_i$, $v\in L_j$, and $i<j$, we do not have $v\le_Q u$.

\section{A poset construction}\label{sec: poset construction}

In this section we introduce the poset construction that will be used later in the proof of Theorem~\ref{thm:universal-bound}. The starting point for our construction is the standard embedding of a poset into a Boolean lattice. Given a poset $P$, one may send each element $x\in P$ to its closed downset $\overline{D}_x^P=\{y\in P:y\le_P x\}$. Note that $x\le_P y$ if and only if $\overline{D}_x^P\subseteq  \overline{D}_y^P$. Therefore, every $n$-element poset embeds into the Boolean lattice $(2^{[n]},\subseteq)$. Equivalently, one can think of this as a labeling scheme in which each vertex stores all of its relations to vertices that are smaller or equal than itself. The drawback of this construction is that is too expensive for our purposes: For any set $S\subseteq [n]$, there exists an element $x \in S$ and a poset $P\in \mathcal{P}_n$ such that $\overline{D}_x^P=S$.

Our solution is to compress the above version of the Boolean-lattice labeling. For each poset $P$, we choose an auxiliary set $S:=\varphi(P)\subseteq P$. The set $S$ should be thought of as a collection of vertices through which many comparisons will be certified. Instead of storing the full closed upset and downset of every element, we store full information only for vertices outside $S$, while vertices in $S$ store only their information on $S$. The order in the universal poset is then defined by comparing these truncated upsets and downsets. Thus, if the possible choices for $S$ are not too many, and if for each fixed $S$ there are few possible truncated upsets and downsets, the resulting universal poset is much smaller than the original Boolean lattice.

To be more precise, let $\mathcal P$ be a family of $n$-element posets. For each $P\in\mathcal P$, we choose a bijection $\psi_P$ between its ground set and $[n]$, and identify $P$ with the corresponding labelled poset on $[n]$. We also assume that, for every $P\in\mathcal P$, we are given a subset $\varphi(P)\subseteq [n]$. The choice of $\varphi(P)$ may depend on $P$. For $x\in [n]$, write
\begin{align*}
        \overline{U}_x^P=\{y\in [n]:\psi_P^{-1}(x)\le_P \psi_P^{-1}(y)\}
        \quad\quad\quad \text{and} \quad\quad\quad
        \overline{D}_x^P=\{y\in [n]:\psi_P^{-1}(y)\le_P \psi_P^{-1}(x)\}
\end{align*}
for the closed upset and closed downset of $x$ in $[n]$. We then define the \emph{compressed upset and downset} of $x$ by
\begin{align}\label{defi: UxDx}
        U_x^P=
        \begin{cases}
        \overline U_x^P, & x\notin \varphi(P),\\
        \overline U_x^P\cap \varphi(P), & x\in \varphi(P),
        \end{cases}
        \quad\quad\quad \text{and} \quad\quad\quad
        D_x^P=
        \begin{cases}
        \overline D_x^P, & x\notin \varphi(P),\\
        \overline D_x^P\cap \varphi(P), & x\in \varphi(P).
        \end{cases}
\end{align}
Thus a vertex outside $\varphi(P)$ stores its full closed upset and downset, whereas a vertex inside $\varphi(P)$ stores only their intersection on $\varphi(P)$. We define the label of $x$ in $P$ to be
\begin{align}\label{eq:labeling}
        L_P(x)=\big(x,\varphi(P),U_x^P,D_x^P\big).
\end{align}
Let $Q=Q(\mathcal P,\varphi)$ be the set of all labels that arise in this way. That is,
\begin{align*}
        Q=\left\{
        L_P(x):
        P\in\mathcal P,\ x\in [n]
        \right\}.
\end{align*}
We define an order relation $\leq_Q$ on the elements of $Q$ as follows. Let
\begin{align*}
        \ell=(i,S,U,D),
        \quad \quad\quad\text{and} \quad \quad\quad
        \ell'=(j,S',U',D')
\end{align*}
be two elements of $Q$. We declare $\ell\le_Q \ell'$ if either \(\ell=\ell'\), or all the following conditions hold:
\begin{enumerate}[label=(Q\arabic*)]
    \item\label{eq:same_S} $S=S'$.
    \item\label{eq:trace-nesting} Their traces on the set $S$ and $S'$ satisfy $U' \cap S'\subseteq U\cap S$ and $D\cap S\subseteq D'\cap S'$.
    \item\label{eq:specialcase} If $i \notin S$ and $j\notin S'$, then  one of the next two conditions hold:
    \begin{enumerate}[label=(Q3.\roman*)]
        \item\label{eq:full-nesting} The full closed downsets and upsets satisfy $U'\subseteq U$ and $D\subseteq D'$.
        \item\label{eq:S-mediated} It holds that $(U\cap S)\cap (D'\cap S')\neq \emptyset$.
    \end{enumerate}
\end{enumerate}

We will show that this relation $\leq_Q$ defines a poset $Q$ and that $Q$ is universal for $\mathcal P$. We first state a simple observation that will be used repeatedly. By construction, we have
\begin{align}\label{eq:unique_intersection}
U\cap D=\{i\}
\end{align}
for every label $\ell=(i,S,U,D)$.

\begin{prop}\label{prop: Universal Q}
The relation $\leq_Q$ defined above is a partial order on $Q$.
\end{prop}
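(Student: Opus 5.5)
The plan is to check the three axioms of a partial order directly from the definition of $\le_Q$, using only \eqref{eq:unique_intersection} and one more observation. If $i\in S$, then by \eqref{defi: UxDx} both $U$ and $D$ are subsets of $S$, so $U=U\cap S$ and $D=D\cap S$. Reflexivity holds by definition, since $\ell\le_Q\ell$ via the clause $\ell=\ell'$. Throughout, write $\ell=(i,S,U,D)$ and $\ell'=(j,S',U',D')$; whenever two distinct labels are comparable, \ref{eq:same_S} lets us set $S=S'$.

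\textbf{Antisymmetry.} Suppose $\ell\le_Q\ell'$, $\ell'\le_Q\ell$ and $\ell\neq\ell'$. Applying \ref{eq:trace-nesting} in both directions gives $U\cap S=U'\cap S$ and $D\cap S=D'\cap S$. I split according to how many of $i,j$ lie in $S$.
\begin{itemize}
\item If $i,j\in S$, the observation above gives $U=U'$ and $D=D'$. Then \eqref{eq:unique_intersection} yields $\{i\}=U\cap D=U'\cap D'=\{j\}$, so $\ell=\ell'$, a contradiction.
\item If $i\in S$ and $j\notin S$ (the other mixed case is symmetric), then $i\in U\cap D\cap S$. By the equal traces, $i\in U'\cap D'=\{j\}$. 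This forces $i=j$, contradicting $i\in S$, $j\notin S$.
\item If $i,j\notin S$, condition \ref{eq:S-mediated} cannot hold in either direction. Indeed, an element $s\in U\cap S\cap D'$ would lie in $U'\cap D'\cap S=\{j\}\cap S=\emptyset$, and symmetrically in the other direction. So \ref{eq:full-nesting} holds both ways, giving $U=U'$ and $D=D'$. Hence $i=j$ by \eqref{eq:unique_intersection}, and $\ell=\ell'$, a contradiction.
\end{itemize}

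\textbf{Transitivity.} Let $\ell_k=(i_k,S,U_k,D_k)$ for $k=1,2,3$ with $\ell_1\le_Q\ell_2\le_Q\ell_3$. We may assume the three labels are pairwise distinct, since otherwise the conclusion is trivial. Then \ref{eq:same_S} holds, and \ref{eq:trace-nesting} follows by chaining the inclusions $U_3\cap S\subseteq U_2\cap S\subseteq U_1\cap S$ and $D_1\cap S\subseteq D_2\cap S\subseteq D_3\cap S$. For \ref{eq:specialcase}, assume $i_1,i_3\notin S$; we must produce \ref{eq:full-nesting} or \ref{eq:S-mediated} for the pair $(\ell_1,\ell_3)$.
\begin{itemize}
\item If $i_2\in S$, then $i_2\in U_2\cap D_2\cap S$. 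The chains above place $i_2$ in $U_1\cap S\cap D_3$, so \ref{eq:S-mediated} holds.
\item If $i_2\notin S$ and both steps use \ref{eq:full-nesting}, the inclusions compose to give \ref{eq:full-nesting} for $(\ell_1,\ell_3)$.
\item If $\ell_1\le_Q\ell_2$ uses \ref{eq:S-mediated}, take $s\in U_1\cap S\cap D_2$. Since $D_2\cap S\subseteq D_3\cap S$, we get $s\in U_1\cap S\cap D_3$.
\item If $\ell_2\le_Q\ell_3$ uses \ref{eq:S-mediated}, take $s\in U_2\cap S\cap D_3$. Since $U_2\cap S\subseteq U_1\cap S$, again $s\in U_1\cap S\cap D_3$.
\end{itemize}

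The only delicate step is antisymmetry in the case $i,j\notin S$. There one must rule out the $S$-mediated clause \ref{eq:S-mediated} using the trace equalities together with \eqref{eq:unique_intersection}. This is exactly where $U\cap D=\{i\}$ is essential. Everything else is bookkeeping of inclusions.
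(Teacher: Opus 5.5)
Your proof is correct and follows essentially the same argument as the paper: for antisymmetry, trace equalities combined with \eqref{eq:unique_intersection}, where you split the paper's ``$i\in S$ or $j\in S$'' case into two subcases and you justify, rather than assert, why \ref{eq:S-mediated} cannot hold when $i,j\notin S$; for transitivity, the same three-case analysis. Your explicit treatment of comparisons that hold only through the clause $\ell=\ell'$ is a minor tidying that the paper leaves implicit.
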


\begin{proof}
Reflexivity follows directly from the definition of $\leq_Q$. We proceed to verify that anti-symmetry holds. Let $\ell=(i,S,U,D)$ and $\ell'=(j,S',U',D')$ be two labels in $Q$. Assume that $\ell'\leq_Q \ell$ and $\ell\leq_Q\ell'$. From condition \ref{eq:same_S} we obtain that $S=S'$. 

We claim that $i=j$, $U=U'$ and $D=D'$. Indeed, by condition \ref{eq:trace-nesting}, the fact that $\ell \leq_Q\ell'$, $\ell'\leq_Q \ell$ and $S=S'$ we obtain that
\begin{align}\label{eq:equal_trace}
    U\cap S=U'\cap S \quad\quad\quad \text{and} \quad\quad\quad D\cap S=D'\cap S.
\end{align}
Thus, in particular $U\cap D\cap S=U'\cap D'\cap S$ holds. We split the proof into two cases:

\vspace{0.2cm}

\noindent \textbf{Case 1:} Either $i \in S$ or $j\in S$.

\vspace{0.1cm}

We may suppose that $i\in S$. Therefore, by~\eqref{eq:unique_intersection} we have $i \in U\cap D\cap S=U'\cap D'\cap S\subseteq U'\cap D'$. However, this implies again by~\eqref{eq:unique_intersection} that $i=j$. Since $i=j \in S$, we have by~\eqref{eq:equal_trace} that
\begin{align*}
    U=U\cap S=U'\cap S=U' \quad\quad\quad \text{and} \quad\quad\quad D=D\cap S=D'\cap S=D'.
\end{align*}
This concludes that $\ell=\ell'$.

\vspace{0.2cm}

\noindent \textbf{Case 2:}  $i, j\notin S$.

\vspace{0.1cm}

In this case, we have to use the extra condition \ref{eq:specialcase}. Note that by the hypothesis it always hold that $(U\cap S)\cap (D'\cap S)= (U'\cap S)\cap (D\cap S)=\emptyset$. Therefore, condition \ref{eq:S-mediated} can never happen and we obtain that $\ell\leq_Q\ell'$ and $\ell'\leq_Q\ell$ via \ref{eq:full-nesting}. The condition in both directions gives us that $U=U'$ and $D=D'$. We finish by noting that~\eqref{eq:unique_intersection} implies that $x=y$. Hence, $\ell=\ell'$. 

\vspace{0.2cm}

It remains to prove transitivity. Let $\ell_k=(i_k, S_k, U_k,D_k)$ for $1\leq k\leq 3$. Suppose that $\ell_1\leq_Q \ell_2$ and $\ell_2\leq_Q\ell_3$. Our goal is to show that $\ell_1\leq_Q\ell_3$. First, note that condition \ref{eq:same_S} implies that $S=S_1=S_2=S_3$. Moreover, condition \ref{eq:trace-nesting} together with $\ell_1\leq_Q \ell_2$ and $\ell_2\leq_Q \ell_3$ implies that
\begin{align*}
        U_3\cap S\subseteq U_2\cap S\subseteq U_1\cap S
        \quad\quad\quad \text{and} \quad\quad\quad D_1\cap S\subseteq D_2\cap S\subseteq D_3\cap S.
\end{align*}
Hence, \ref{eq:trace-nesting} holds for $\ell_1$ and $\ell_3$. 

If at least one of $i_1$, $i_3$ lies on $S$, then the last two conditions already show that $\ell_1\leq_Q\ell_3$. Hence, we may assume that $i_1,i_3\notin S$ and we need to verify the additional condition \ref{eq:specialcase} for two vertices outside $S$. We again split the proof into three cases:

\vspace{0.2cm}
\noindent \textbf{Case A:} $i_2\in S$.

\vspace{0.2cm}

Now $i_2\in S$ implies that  $i_2\in U_2\cap S$ and $i_2\in D_2\cap S$. Since $\ell_1\le_Q\ell_2$, we have by~\ref{eq:trace-nesting} that $U_2\cap S\subseteq U_1\cap S$ and hence $i_2\in U_1\cap S$. Similarly, since $\ell_2\le_Q\ell_3$, we also have by~\ref{eq:trace-nesting} that $D_2\cap S\subseteq D_3\cap S$ and consequently that $i_2\in D_3\cap S$. Thus, 
\begin{align*}
    i_2\in (U_1\cap S)\cap(D_3\cap S)
\end{align*}
and by~\ref{eq:S-mediated} we obtain that $\ell_1\leq_Q\ell_3$.

\vspace{0.2cm}

\noindent \textbf{Case B:} $i_2\notin S$ and  both comparisons $\ell_1\leq_Q \ell _2$ and $\ell_2\leq_Q \ell_3$ use condition \ref{eq:full-nesting}.

\vspace{0.2cm}

In this case, it holds that
\begin{align*}
     U_3\subseteq U_2\subseteq U_1
       \quad\quad \quad\text{and}\quad\quad\quad
        D_1\subseteq D_2\subseteq D_3.
\end{align*}
Hence,~\ref{eq:full-nesting} holds for $\ell_1$ and $\ell_3$ and we have that $\ell_1\leq_Q\ell_3$.

\vspace{0.2cm}

\noindent \textbf{Case C:} $i_2\notin S$ and  at least one of the comparisons $\ell_1\leq_Q\ell_2$ and $\ell_2\leq_Q\ell_3$ uses condition \ref{eq:S-mediated}.

\vspace{0.2cm}

Suppose that $\ell_1\le_Q\ell_2$ uses condition \ref{eq:S-mediated}. Then there is some $s\in (U_1\cap S)\cap(D_2\cap S)$ mediating the comparison. Since, by $\ell_2\leq_Q\ell_3$, condition~\ref{eq:trace-nesting} implies  that $D_2\cap S\subseteq D_3\cap S$, we obtain that $s\in (U_1\cap S)\cap(D_3\cap S)$.
This implies that~\ref{eq:S-mediated} holds and $\ell_1\leq_Q \ell_3$. A similar argument holds if $\ell_2\le_Q\ell_3$ uses condition \ref{eq:S-mediated}.

\vspace{0.2cm}

This proves transitivity and concludes the proof of the lemma.
\end{proof}

We now prove that the construction contains every poset in $\mathcal P$ as an induced subposet.

\begin{prop}\label{prop:Embedding}
For every $P\in\mathcal P$, and every  $\varphi(P)$, the map $\psi_P^{-1}(x)\mapsto L_P(x)$
is an order embedding of $P$ into $Q$.
\end{prop}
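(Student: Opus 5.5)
Fix $P\in\mathcal P$, write $S=\varphi(P)$, and identify $P$ with its labelled copy on $[n]$ via $\psi_P$. Injectivity is immediate, since the first coordinate of $L_P(x)$ is $x$. By Proposition~\ref{prop: Universal Q}, $\le_Q$ is a partial order, so it remains to show that for distinct $x,y\in[n]$ we have $x\le_P y$ if and only if $L_P(x)\le_Q L_P(y)$. Condition~\ref{eq:same_S} holds automatically, since both labels carry the same set $S$. Throughout I will use the identities $U_x^P\cap S=\overline U_x^P\cap S$ and $D_x^P\cap S=\overline D_x^P\cap S$, which hold whether or not $x\in S$.

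\emph{Forward direction.} Assume $x\le_P y$. By transitivity of $P$, $\overline U_y^P\subseteq\overline U_x^P$ and $\overline D_x^P\subseteq\overline D_y^P$. Intersecting with $S$ gives~\ref{eq:trace-nesting}. If $x,y\notin S$, then the compressed sets are the full closed sets, so~\ref{eq:full-nesting} holds and hence~\ref{eq:specialcase} is satisfied. Otherwise~\ref{eq:specialcase} is vacuous. Either way, $L_P(x)\le_Q L_P(y)$.

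\emph{Backward direction.} Assume $L_P(x)\le_Q L_P(y)$ with $x\ne y$. I split into cases according to which of $x,y$ lie in $S$.
\begin{itemize}
\item If $x\in S$: then $x\in \overline D_x^P\cap S=D_x^P\cap S\subseteq D_y^P\cap S\subseteq\overline D_y^P$ by~\ref{eq:trace-nesting}, so $x\le_P y$.
\item If $y\in S$: then $y\in \overline U_y^P\cap S=U_y^P\cap S\subseteq U_x^P\cap S\subseteq \overline U_x^P$, so $x\le_P y$.
\item If $x,y\notin S$: condition~\ref{eq:specialcase} applies.
  \begin{itemize}
  \item Under~\ref{eq:full-nesting}, $D_x^P=\overline D_x^P\subseteq \overline D_y^P$, so $x\in\overline D_y^P$ and $x\le_P y$.
  \item Under~\ref{eq:S-mediated}, there is $s\in \overline U_x^P\cap\overline D_y^P$, so $x\le_P s\le_P y$, and transitivity in $P$ gives $x\le_P y$.
  \end{itemize}
\end{itemize}

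The only delicate point is the case distinction. One must check that the trace on $S$ always agrees with the trace of the full closed sets, and that for $x,y\notin S$ the stored sets are the full ones. Both follow directly from~\eqref{defi: UxDx}, so I expect no real obstacle.
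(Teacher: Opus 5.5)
Your proof is correct and follows essentially the same route as the paper. The forward direction intersects the nested closed up- and down-sets with $S$ and invokes (Q3.i) when $x,y\notin S$; the backward direction splits into $x\in S$, $y\in S$, or both outside $S$ with subcases (Q3.i) and (Q3.ii). The only differences are cosmetic: you make injectivity and the reduction to $x\neq y$ explicit, and in the (Q3.i) subcase you use the downset inclusion $D_x^P\subseteq D_y^P$ where the paper uses $U_y^P\subseteq U_x^P$.
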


\begin{proof}
Fix $P\in\mathcal P$, and write $S=\varphi(P)$. Let $x,y\in [n]$. We want to show that $\psi_P^{-1}(x)\leq_P\psi_P^{-1}(y)$ if and only if $L_P(x)\leq_Q L_P(y)$. First suppose that $\psi_P^{-1}(x)\leq_P\psi_P^{-1}(y)$. Then by definition, the closed downsets and upsets satisfies
\begin{align*}
        \overline U_y^P\subseteq \overline U_x^P
        \quad\quad\quad\text{and}\quad\quad\quad
        \overline D_x^P\subseteq \overline D_y^P.
\end{align*}
Hence, it follows immediately that
\begin{align*}
        U_y^P\cap S\subseteq U_x^P\cap S
        \quad\quad\quad\text{and}\quad\quad\quad
        D_x^P\cap S\subseteq D_y^P\cap S.
\end{align*}
If at least one of $x,y$ lies in $S$, then by the definition of $\leq_Q$ it holds that $L_P(x)\le_Q L_P(y)$. If both $x,y\notin S$, then $U_x^P=\overline U_x^P$, $U_y^P=\overline U_y^P$, $D_x^P=\overline D_x^P$, and $D_y^P=\overline D_y^P$. Hence condition \ref{eq:full-nesting} holds, and again $L_P(x)\le_Q L_P(y)$.

Conversely, suppose that $L_P(x)\le_Q L_P(y)$. We claim that $\psi_P^{-1}(x)\leq_P\psi_P^{-1}(y)$. We split the proof into two cases:

\vspace{0.2cm}

\noindent \textbf{Case 1:} Either $x\in S$ or $y\in S$.

\vspace{0.2cm}

Suppose that $x\in S$. Then by condition \ref{eq:trace-nesting} it holds that $x\in D_x^P\cap S\subseteq D_y^P\cap S$. Therefore, $x\in D_y^P$ and by definition we have that $\psi_P^{-1}(x)\leq_P\psi_P^{-1}(y)$. Similarly if $y\in S$, then $y\in U_y^P\cap S\subseteq U_x^P\cap S$. Thus $y\in U_x^P$, and again $\psi_P^{-1}(x)\leq_P\psi_P^{-1}(y)$.

\vspace{0.2cm}

\noindent \textbf{Case 2:} $x,y\notin S$.

\vspace{0.2cm}

If the comparison uses condition \ref{eq:full-nesting}, then
$y\in U_y^P\subseteq U_x^P$, and hence $\psi_P^{-1}(x)\leq_P\psi_P^{-1}(y)$. If instead the comparison used condition \ref{eq:S-mediated}, then there exists some $s \in S$ such that $s\in (U_x^P\cap S)\cap(D_y^P\cap S)$. Thus, by definition, $\psi_P^{-1}(x)\leq_P\psi_P^{-1}(s)$ and $\psi_P^{-1}(s)\leq_P\psi_P^{-1}(y)$. Hence, by transitivity of $P$, we obtain $\psi_P^{-1}(x)\leq_P\psi_P^{-1}(y)$. This concludes the proof of the proposition.
\end{proof}

We finish the section by giving an estimate of the size of the universal poset obtained from this construction. Let
\begin{align*}
        \mathcal{S}=\{\varphi(P):P\in\mathcal P\}
\end{align*}
be the collection of auxiliary sets $\varphi(P)$ that arise from the family of posets in $\mathcal P$. For each $S\in\mathcal{S}$ and $x\in [n]$, define the set of possible local profiles by
\begin{align*}
        \mathcal T_{S,x}=
        \left\{
        (U_x^P,D_x^P):
        P\in\mathcal P,\ \varphi(P)=S,\ x\in [n]
        \right\}.
\end{align*}
Thus $\mathcal T_{S,x}$  stores all possible compressed upsets and downsets once the auxiliary set $S$ and an element $x\in [n]$ has been fixed.

\begin{prop}\label{prop:construction_size}
The poset $Q:=Q(\mathcal P,\varphi)$ constructed above satisfies
\begin{align*}
        |Q|
        =
        \sum_{S\in\mathcal{S}}\sum_{x\in [n]}|\mathcal T_{S,x}|
        \leq        n\cdot |\mathcal{S}|\cdot\max_{S\in\mathcal{S},\, x\in[n]}|\mathcal T_{S,x}|.
\end{align*}
\end{prop}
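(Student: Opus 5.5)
The plan is a direct counting argument: I will exhibit a bijection between $Q$ and the disjoint union $\bigsqcup_{S\in\mathcal S}\bigsqcup_{x\in[n]}\mathcal T_{S,x}$, and then bound the resulting double sum termwise.

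First, consider the map sending a triple $(S,x,(U,D))$ with $S\in\mathcal S$, $x\in[n]$ and $(U,D)\in\mathcal T_{S,x}$ to the label $(x,S,U,D)$. I will check that this map lands in $Q$ and is onto $Q$.

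For well-definedness, membership $(U,D)\in\mathcal T_{S,x}$ means there is some $P\in\mathcal P$ with $\varphi(P)=S$ and $(U,D)=(U_x^P,D_x^P)$. For that $P$ we have $L_P(x)=(x,\varphi(P),U_x^P,D_x^P)=(x,S,U,D)$, so the image lies in $Q$.

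For surjectivity, every element of $Q$ has the form $L_P(x)$ for some $P\in\mathcal P$ and $x\in[n]$. Setting $S=\varphi(P)$, we get $S\in\mathcal S$ and $(U_x^P,D_x^P)\in\mathcal T_{S,x}$, so $L_P(x)$ is the image of $(S,x,(U_x^P,D_x^P))$.

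Next comes injectivity, which is the only point needing care. A label $(x,S,U,D)$ records all three pieces of data explicitly: the index $x$, the set $S$, and the pair $(U,D)$. Hence two triples with the same image coincide. This identifies $Q$ with the disjoint union and gives
\[
|Q|=\sum_{S\in\mathcal S}\sum_{x\in[n]}|\mathcal T_{S,x}|.
\]
I will also note in passing why the count is not inflated. The definition of $\mathcal T_{S,x}$ fixes $x$ (the quantifier ``$x\in[n]$'' inside it is redundant), and distinct posets $P$ that produce the same label are identified in both $Q$ and $\mathcal T_{S,x}$, since both are defined as sets.

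Finally, for the inequality, I bound each of the $|\mathcal S|\cdot n$ summands by $\max_{S\in\mathcal S,\,x\in[n]}|\mathcal T_{S,x}|$. I do not expect any real obstacle here; the only subtlety is the injectivity step, and it is handled by the fact that labels store $x$ and $S$ explicitly.
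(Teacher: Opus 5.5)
Your proposal is correct and is essentially the paper's argument. The paper groups the labels $L_P(x)=(x,\varphi(P),U_x^P,D_x^P)$ by the pair $(S,x)$, which is exactly your bijection with $\bigsqcup_{S\in\mathcal S}\bigsqcup_{x\in[n]}\mathcal T_{S,x}$, and then bounds each of the $n|\mathcal S|$ summands by the maximum; you have simply made explicit the well-definedness, surjectivity and injectivity (labels record $x$ and $S$) that the paper leaves implicit.
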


\begin{proof}
By definition, the elements of $Q$ are precisely the set of labels
\begin{align*}
        L_P(x)=(x,S,U_x^P,D_x^P)
\end{align*}
arising from a poset $P\in\mathcal P$. Grouping these labels according to the auxiliary set $S=\varphi(P)$ and $x\in [n]$, we obtain the desired result.
\end{proof}

As a consequence of Propositions~\ref{prop: Universal Q}, \ref{prop:Embedding}, and~\ref{prop:construction_size}, the task of constructing a small universal poset for a family $\mathcal P$ is reduced to choosing the auxiliary sets $\varphi(P)$ in such a way that both the number of possible choices for $\varphi(P)$ and the number of possible local profiles $(U_x^P,D_x^P)$ are small. This will be done in Section~\ref{sec:proof}.

\section{Regularity Lemma}\label{reglemma}

In this section, we state a refined version of the Szemer\'edi Regularity lemma that will be used in the proof of Theorem \ref{thm:universal-bound}. This version will be
convenient later, since we will start with a partition coming from the layers of a poset
and then refine it into a regular partition. We then recall an induced embedding lemma
of Alon, Balogh, Bollobás, and Morris~\cite{Alon2011Hereditary} and use it to obtain an important structural consequence: the reduced graph formed by the
regular pairs of medium density in the comparability graph of a poset is triangle-free. 

Let $G$ be a graph, and let $A,B\subseteq V(G)$ be disjoint non-empty vertex sets. The \emph{edge density} of the pair $A,B$ in $G$ is defined by $d_G(A,B)=\frac{e_G(A,B)}{|A||B|}$, where $e_G(A,B)$ denotes the number of edges in $G$ with one endpoint in $A$ and the other in $B$. When the graph $G$ is clear from the context, we
write simply $e(A,B)$ and $d(A,B)$. Given $\eps>0$, we say that $(A,B)$ is  $\varepsilon$-\emph{regular} if for every pair of subsets $X\subseteq A$ and $Y\subseteq B$ satisfying $|X|\geq \varepsilon |A|$ and $|Y|\geq \varepsilon |B|$ we have $|d(X, Y)-d(A,B)|< \varepsilon$. 

We next define the form of regular partition that we will use. Let $G$ be a graph on $n$ vertices, and let $\varepsilon>0$. An \emph{$\eps$-regular partition} of a graph $G$ is a partition $V(G)=V_0\cup \ldots \cup V_t$ such that
\begin{enumerate}[label=($\roman*$),itemsep=-2pt]
\item\label{exceptionalvtx} $|V_0|\leq \varepsilon n$.
\item\label{equitablepart} $|V_1|=\ldots =|V_t|.$
\item\label{allbutt2} All but at most $\varepsilon t^2$ pairs of distinct indices $i,j\in[t]$ satisfy that $(V_i,V_j)$ is $\varepsilon$-regular.
\end{enumerate}
The vertex set $V_0$ is usually called the {\it exceptional set}.

A partition $\{V_1,\ldots,V_t\}$ is a \emph{refinement} of the partition $\{W_1,\ldots,W_k\}$ if, for every $i\in[t]$ there exists $j\in[k]$ such that $V_i\subseteq W_j$. We use the following version of the Szemer\'edi Regularity Lemma~\cite{Szemeredi1978}, in which a prescribed initial partition is refined by the resulting regular partition. This formulation is obtained by initiating the standard proof with the given partition (e.g,~\cite[Theorem~5.1]{Schaudt2019Partitioning}).

\begin{thm}[Regularity lemma]\label{thm:refined_regularity}
For every $\varepsilon>0$ and $m,k\in\mathbb{N}$, there exists integer $M:=M(\eps,m,k)$ such that the following holds for every graph $G$ on $n\geq M$ vertices, and with a partition $V(G)=W_1\cup\ldots\cup W_k$ of its vertex set: There exists an $\varepsilon$-regular partition $V(G)=V_0\cup\ldots\cup V_t$ with $\max\{m,k\}\leq t\leq M$ and such that for every $i\in [t]$, there exists $\omega(i)\in [k]$ with $V_i\subseteq W_{\omega(i)}$. That is, the partition $V(G)=\bigcup_{x\in V_0}\{x\}\cup \bigcup_{i=1}^t V_i$ is a refinement of $\{W_1,\ldots, W_k\}$.
\end{thm}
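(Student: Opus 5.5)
This is the standard Szemerédi Regularity Lemma with a prescribed initial partition. The plan is to run the usual energy-increment proof, but to start it from a refinement of $\{W_1,\ldots,W_k\}$ instead of from an arbitrary equipartition.

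\emph{Initial equipartition.} Set $m_0=\max\{m,k,\lceil 4/\varepsilon\rceil\}$ and choose a part size $s=\lfloor \varepsilon n/(4m_0)\rfloor$. Cut each $W_j$ into as many consecutive blocks of size exactly $s$ as possible, and put the leftover vertices (fewer than $s$ per class) into $V_0$. Then $|V_0|\le ks\le \varepsilon n/4$. Every block lies inside a single $W_{\omega(i)}$, and the number of blocks is at least $m_0$ and at most $n/s=O(m_0/\varepsilon)$. This bound depends only on $\varepsilon,m,k$, because $n\ge M$ is large.

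\emph{Energy increment.} Use the index (mean-square density) $q(\mathcal P)=\sum_{i,j}\frac{|V_i||V_j|}{n^2}d(V_i,V_j)^2\le 1$. Suppose the current partition $V_0\cup V_1\cup\cdots\cup V_t$ has more than $\varepsilon t^2$ irregular pairs. For each irregular pair, take the witness sets $X\subseteq V_i$, $Y\subseteq V_j$, and refine every $V_i$ by the Venn diagram of all witness sets lying in it. This produces at most $t2^{t}$ parts. The standard Cauchy--Schwarz defect argument shows that the index increases by at least $\varepsilon^5/2$.

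\emph{Equalize.} Cut the refined parts into equal pieces of size about $|V_i|/4^{t}$ and move the remainders to $V_0$. This adds at most $\varepsilon n/2^{t+1}$ vertices to $V_0$ per round, and costs only a negligible amount of index. Since we only ever subdivide the existing parts $V_i$, each new part still lies in a single $W_{\omega(i)}$, so the refinement property is preserved automatically.

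\emph{Iteration and bookkeeping.} The index is bounded by $1$, so the process stops after at most $O(\varepsilon^{-5})$ rounds. The number of parts is then bounded by an iterated tower function of the initial count, and this gives $M(\varepsilon,m,k)$.

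The main obstacle is the bookkeeping for $|V_0|$. Its initial contribution is $\le\varepsilon n/4$, and the contributions added in later rounds form a geometric series summing to $\le\varepsilon n/2$, so $|V_0|\le\varepsilon n$ holds throughout. The lower bound $t\ge\max\{m,k\}$ holds because refinement never decreases the number of parts. Requiring $n\ge M$ ensures that all part sizes stay positive, so that the final partition is a genuine $\varepsilon$-regular partition that refines $\{W_1,\ldots,W_k\}$.
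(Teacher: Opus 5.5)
Your proposal is correct and matches the paper's approach. The paper gives no proof of its own: it only remarks that the statement follows by starting the standard energy-increment proof from the given partition (citing Theorem 5.1 of Schaudt et al.), and, as you observe, every step of that proof only subdivides existing parts, so the refinement property is inherited.

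One harmless slip: cutting the atoms into pieces of size $|V_i|/4^{t}$ can leave up to $n/2^{t+1}$ (not $\varepsilon n/2^{t+1}$) leftover vertices per round. Since $t\ge m_0\ge 4/\varepsilon$ and $t$ grows super-exponentially, the total over all rounds is still at most $n/2^{t_0}\le \varepsilon n/16$, so your bookkeeping for $|V_0|$ goes through.
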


We now recall an induced embedding lemma of Alon, Balogh, Bollobás, and
Morris~\cite{Alon2011Hereditary} used in the context of hereditary properties. For $a, b\in\mathbb{N}$, let $\mathcal{H}(a,b)$ denote the class of graphs whose vertex set can be partitioned into $a$ complete graphs and $b$ independent sets. The following version is equivalent to the original statement by relabeling the graphs.

\begin{lemma}[\cite{Alon2011Hereditary}, Lemma 10]\label{lem: regularity and hereditary}
    Given $\delta>0$ and $m,r\in \mathbb{N}$, there exists an $\varepsilon_0:=\eps_0(\delta, m,r)$ and $n_0:=n_0(\delta,m,r)\in\mathbb{N}$ such that the following holds for $n\geq n_0$ and $\eps\leq \eps_0$. Let $G$ be a graph with a vertex partition $V(G)=V_1\cup\ldots\cup V_{r}$ such that $|V_i|=n$ for all $i\in [r]$, and the pair $(V_i,V_j)$ is $\varepsilon$-regular and with density satisfying $\delta\leq d(V_i,V_j)\leq 1-\delta$ for all distinct $i,j\in[r]$. Then, there exists $a,b\in\mathbb{Z}_{\geq 0}$ with $a+b=r$ such that $G$ contains, as induced subgraphs, all elements from $\mathcal{H}(a,b)$ on at most $m$ vertices.
\end{lemma}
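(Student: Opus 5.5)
This lemma is quoted from~\cite{Alon2011Hereditary}; here is how I would reprove it with a greedy embedding in which the type of each part (clique or independent set) is decided during the embedding rather than fixed in advance.

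\textbf{Step 1: a single target graph.} First I reduce to embedding one fixed ``generic'' pattern. For a target per-part size $N$ (depending only on $m,r$), consider the $r$-partite template $T$ with parts $X_1,\dots,X_r$, $|X_i|=N$. The cross adjacencies of $T$ are chosen so that for every $i$, every set of at most $m$ vertices outside $X_i$, and every adjacency pattern to that set, many vertices of $X_i$ realize the pattern. Concretely, I take every pattern, replicated many times. Adjacencies inside each $X_i$ are left undetermined. If $G$ contains an induced copy of $T$ in which each $X_i$ (after passing to suitable large subsets $X_i'$) is a clique or an independent set, then the number $a$ of clique parts and $b=r-a$ give the conclusion: every $H\in\mathcal H(a,b)$ on at most $m$ vertices embeds greedily into the $X_i'$ using the replicated patterns.

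\textbf{Step 2: greedy embedding with candidate sets.} I embed the vertices of $T$ one at a time, in round-robin order over the parts. I maintain candidate sets $C_j\subseteq V_j$ of size at least $(\delta/4)^{k}n$ after $k$ steps; since $k\le rN$ is bounded, this is at least $\eps n$ once $\eps_0$ is small. When embedding a vertex $v$ into $V_i$, I pick it from $C_i$ to be typical: for every $j\neq i$, both $|N(v)\cap C_j|$ and $|C_j\setminus N(v)|$ are at least $(\delta-\eps)|C_j|$. By $\eps$-regularity and $\delta\le d\le 1-\delta$, all but at most $r\eps|V_i|$ vertices of $C_i$ qualify. I then shrink each $C_j$, $j\ne i$, according to the prescribed cross adjacency. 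For the same part, I shrink $C_i$ to whichever of $N(v)\cap C_i$ and $C_i\setminus N(v)$ is larger, and record a colour $c(v)\in\{\text{adj},\text{non-adj}\}$. This colour then dictates the adjacency between $v$ and every later vertex embedded in $V_i$. This costs only a factor $1/2$, so the size bound survives.

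\textbf{Step 3: homogenizing.} By construction, each embedded part is an ordered sequence $v_1,v_2,\dots$ in which $v_s\sim v_t$ for $s<t$ iff $c(v_s)=\text{adj}$. Hence the vertices of a single colour form a clique or an independent set. By pigeonhole, one colour occurs on at least half of each part; call the set of these vertices $X_i'$ and let this majority colour decide the type of part $i$. The main obstacle is that this restriction must not destroy the universality designed into $T$ in Step 1. My first attempt is to choose the replication multiplicity in $T$ so large, and interleave the copies of each pattern along the embedding order, that any half of each part still contains at least $m$ realizations of every cross pattern relative to any $m$ vertices in the other restricted parts. If the pigeonhole step is too lossy, I would instead iterate: fix the colour-majority part by part (a Ramsey-type nested choice over $r$ rounds), re-embedding with the types already known. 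Since $r$ and $m$ are bounded, this only changes $N$ and $\eps_0$ by constant factors.
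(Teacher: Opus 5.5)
The paper does not prove this lemma; it quotes it from \cite{Alon2011Hereditary}. Your self-contained attempt therefore has to stand on its own, and it has a genuine gap. The gap sits exactly where the hypotheses say nothing: the edges inside each $V_i$.

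First, Step~2 cannot produce the target of Step~1. With one candidate set $C_j$ per part, shrinking $C_j$ ``according to the prescribed cross adjacency'' forces every later vertex of $X_j$ to have the same adjacency to $v$, so the embedded copy has none of the pattern-richness you need. To embed $T$ you must keep a candidate set $C_y$ for each unembedded target vertex $y$. But then the colour $c(v)$ must be the same for all later $y\in X_i$, and nothing guarantees that a single colour keeps every $C_y$ large. For example, let $G[V_i]$ be two disjoint cliques $A,B$ of size $n/2$. Suppose an earlier embedded vertex $w\in V_j$ has $N(w)\cap V_i=A$; a single vertex is invisible to $\eps$-regularity and can pass your typicality test when $d(V_i,V_j)=1/2$. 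Then targets $y\sim w$ and $y'\not\sim w$ in $X_i$ have $C_y\subseteq A$ and $C_{y'}\subseteq B$. Every $v$ is adjacent to all of one and to none of the other, so either colour empties a candidate set.

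Second, the homogenizing step fails as stated. No multiplicity or interleaving makes ``any half'' of $X_i$ rich: the roughly half of $X_i$ non-adjacent to a fixed vertex $z$ in another part contains no realization of any pattern in which $z$ is adjacent. Because the colours are dictated by the uncontrolled graph $G[V_i]$, the majority class can correlate with cross-patterns in exactly this way, as in the example above. Re-embedding with the types known in advance does not help either. Forcing part $i$ to be a clique requires $|N(v)\cap C_y|$ to be large, which is again a density statement about $G[V_i]$ that the hypotheses do not give.

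What is missing is regularity \emph{inside} the parts. One way to supply it:
\begin{enumerate}
\item Apply Theorem~\ref{thm:refined_regularity} to $G$ with initial partition $\{V_1,\dots,V_r\}$ and a parameter $\eps'=\eps'(\delta,m,r)$.
\item In each $V_i$, use Tur\'an's theorem and then Ramsey's theorem on the reduced graph to select $m$ clusters whose mutual pairs are all $\eps'$-regular. Arrange that these pairs either all have density at least $1/2$ (part $i$ will host cliques) or all have density below $1/2$ (part $i$ will host independent sets).
\item Pairs of clusters lying in different parts are sub-pairs of $(V_i,V_j)$ of relative size at least $1/M(\eps')$. They are therefore still regular, with density in about $[\delta,1-\delta]$, once $\eps_0\ll \eps'/M(\eps')$.
\item Run the usual greedy induced embedding with one target vertex per cluster. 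Demand only edges inside clique-type parts and only non-edges inside independent-type parts; cross pairs allow both. This embeds every $H\in\mathcal H(a,b)$ on at most $m$ vertices, with $a$ the number of clique-type parts.
\end{enumerate}
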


We finish this section with the structural statement that will be used in the proof of Theorem~\ref{thm:universal-bound}. Let $F$ be the graph with vertex set $\{v_1, v_2, v_3, v_4, v_5, v_6\}$ and edge set 
\[
E(F)=\{v_1 v_2, v_2v_3, v_3 v_1, v_4 v_1, v_2 v_5, v_3 v_6\}.\] 
That is, the graph $F$ consists of the triangle $v_1 v_2 v_3$ with $3$ additional vertices $v_4$, $v_5$, $v_6$, each one is attached to a different vertex of the triangle (see Figure~\ref{fig:Graph F}). The graph $F$ will play a special role in our argument because it is a classical forbidden induced subgraph for comparability graphs: if $G_P$ is the comparability graph of a poset $P$, then $G_P$ contains no induced copy of $F$ (see~\cite{Gilmore1964Comparability}).

\begin{figure}[h]

\begin{center}
\begin{tikzpicture}[scale=0.75]
\def\x{3}

\draw (.8,0)node[circle,draw, inner sep=\x, fill,label=right:$v_2$](v2){}
 (-.8,0)node[circle,,draw, inner sep=\x, fill,label=left:$v_3$](v3){}
(1.6,-.8)node[circle,draw, inner sep=\x, fill,label=right:$v_5$](v5){}
 (-1.6,-.8)node[circle,draw, inner sep=\x, fill,label=left:$v_6$](v6){}
 (0,1.2)node[circle,draw, inner sep=\x, fill,label=left:$v_1$](v1){}
 (0,2.4)node[circle,draw, inner sep=\x, fill,label=$v_4$](v4){};

 \draw[black,line width=1](v1)--(v2) (v2)--(v3) (v3)--(v1) (v1)--(v4) (v2)--(v5) (v3)--(v6);
\end{tikzpicture}
    \caption{The graph $F$.}\label{fig:Graph F}
\end{center}
\end{figure}

As a corollary of Lemma \ref{lem: regularity and hereditary} we obtain the following result.

\begin{cor}\label{cor:triangle-free}
    For every $\delta>0$, there exists $\varepsilon_0$ and $n_0:=n_0(\delta)\in\mathbb{N}$ such that the following holds for $n\geq n_0$ and $\eps\leq \eps_0$. Let $G$ be a graph with vertex partition $V(G)=V_1\cup V_2 \cup V_{3}$ such that $|V_i|\geq n_0$ for every $i\in [3]$, and the pair $(V_i,V_j)$ is $\varepsilon$-regular with density $\delta\leq d(V_i,V_j)\leq 1-\delta$ for every pair of distinct $i,j\in[3]$. Then, $G$ contains $F$ as an induced subgraph.
\end{cor}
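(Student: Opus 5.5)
The plan is to apply Lemma~\ref{lem: regularity and hereditary} with $r=3$ and $m=6$, and then check by hand that $F$ lies in every class $\mathcal H(a,b)$ with $a+b=3$. Given $\delta>0$, I will take $\varepsilon_0:=\varepsilon_0(\delta,6,3)$ and $n_0:=n_0(\delta,6,3)$ from Lemma~\ref{lem: regularity and hereditary}. For a graph $G$ with partition $V_1\cup V_2\cup V_3$ as in the statement, every pair $(V_i,V_j)$ is $\varepsilon$-regular with $\varepsilon\le\varepsilon_0$ and has density in $[\delta,1-\delta]$. The lemma then gives $a,b\ge 0$ with $a+b=3$ such that $G$ contains, as an induced subgraph, every member of $\mathcal H(a,b)$ on at most $6$ vertices. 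Since $|V(F)|=6$, it suffices to show $F\in\mathcal H(a,b)$ for each of the four possibilities $(a,b)\in\{(3,0),(2,1),(1,2),(0,3)\}$. I allow empty parts, as the definition of $\mathcal H(a,b)$ permits; where convenient I also give partitions with nonempty parts.

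The four partitions of $V(F)$ are as follows.
\begin{itemize}
\item $(3,0)$: the cliques $\{v_1,v_4\}$, $\{v_2,v_5\}$, $\{v_3,v_6\}$. These are edges of $F$.
\item $(2,1)$: the cliques $\{v_1,v_2\}$ and $\{v_3,v_6\}$, and the independent set $\{v_4,v_5\}$.
\item $(1,2)$: the clique $\{v_1,v_2,v_3\}$ and the independent sets $\{v_4,v_5\}$ and $\{v_6\}$.
\item $(0,3)$: the independent sets $\{v_1,v_5,v_6\}$, $\{v_2,v_4\}$ and $\{v_3\}$.
\end{itemize}
Each claimed independent set is checked against the edge list $E(F)=\{v_1v_2,v_2v_3,v_3v_1,v_4v_1,v_2v_5,v_3v_6\}$. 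For example, $v_1$ is adjacent only to $v_2,v_3,v_4$, and $v_5,v_6$ are adjacent only to $v_2,v_3$ respectively. Hence in every case $F\in\mathcal H(a,b)$, so $G$ contains an induced copy of $F$.

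The only genuine obstacle is a mismatch of hypotheses. Lemma~\ref{lem: regularity and hereditary} requires the three parts to have exactly equal size, whereas the corollary only assumes $|V_i|\ge n_0$. In the intended application the parts come from an $\varepsilon$-regular partition (condition~\ref{equitablepart}), so they have equal size and the lemma applies verbatim. If unequal sizes really must be handled, I would first try to trim the parts to a common size $N=\min_i|V_i|$ and use the standard fact that an $\varepsilon$-regular pair restricted to subsets of at least an $\alpha$-fraction of each side is $\max(2\varepsilon,\varepsilon/\alpha)$-regular, with density changed by less than $\varepsilon$. This works when the sizes are comparable, after shrinking $\varepsilon_0$ and replacing $\delta$ by $\delta/2$. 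If the sizes are very unbalanced, I would instead pass to random subsets of size $N$, which are regular with high probability by a sampling lemma. In the write-up, however, I would simply state the corollary for equal part sizes, since that is the case used later.
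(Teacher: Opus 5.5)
Your proof is correct and matches the paper's: apply the Alon--Balogh--Bollob\'as--Morris lemma with $r=3$ and $m=6$, then show $F\in\mathcal H(a,b)$ for every $a+b=3$; your four partitions differ from the paper's, but each is valid against the edge list of $F$. Your concern about part sizes is justified, since the paper applies the lemma under $|V_i|\ge n_0$ rather than equal sizes without comment. As you say, the clusters $V_1,\dots,V_t$ in the application to $R_{\rm grey}$ are equal-sized, so the equal-parts version of the corollary is all that is needed.
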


\begin{proof}
    By Lemma~\ref{lem: regularity and hereditary}, it suffices to show that $F\in\mathcal{H}(a,b)$ for every pair $a,b\in\mathbb{Z}_{\geq 0}$ satisfying $a+b=3$. 
    Indeed, we can partition $F$ into the following four possibilities:
    \begin{itemize}
        \item[\tiny$\bullet$] Three disjoint edges $\{v_1,v_4\}, \{v_2,v_5\}, \{v_3,v_6\}$;
        \item[\tiny$\bullet$] Two disjoint edges $\{v_2,v_3\}$, $\{v_1,v_4\}$ and an independent set $\{v_5, v_6\}$;
        \item[\tiny$\bullet$] An edge $\{v_1,v_4\}$ and two independent sets $\{v_2,v_6\},\{v_3,v_5\}$;
        \item[\tiny$\bullet$] Three independent sets $\{v_1,v_5\},\{v_2,v_6\},\{v_3,v_4\}$.
    \end{itemize}
    Each possibility witness the containment for a different pair $\{a,b\}$ satisfying $a+b=3$.
\end{proof}

An immediate consequence of Corollary \ref{cor:triangle-free} is that the reduced graph obtained by considering the pairs of density between $\delta$ and $1-\delta$ is triangle-free. This is going to be crucial for our construction in the next section.

\section{Proof of Theorem~\ref{thm:universal-bound}}\label{sec:proof}

In this section, we prove Theorem~\ref{thm:universal-bound}. The proof uses the abstract
construction from Section~\ref{sec: poset construction}. For each poset $P$, our
task is to choose a labeling of its ground set by $[n]$ and an auxiliary set
$\varphi(P)\subseteq[n]$ in such a way that both the number of possible auxiliary sets  and the number of possible compressed profiles $(U_x^P,D_x^P)$ are small. To obtain such a good compression, similarly as in \cite{bonamy2021optimal}, we will use the regularity lemma to analyze the structure of the comparability graph $G_P$ of $P$.

Before we proceed with the details, we fixed the parameters throughout the proof. Let $\eta>0$ be the real number given in the statement. We choose real numbers $\eps, \delta \in (0,1)$ and $k\in \mathbb{N}$ satisfying
\begin{align*}
        n^{-1}\ll \eps\ll \delta \ll k^{-1}\ll \eta .
\end{align*}
More precisely, the parameter $\eps$ is chosen small enough so that Corollary~\ref{cor:triangle-free} applies with density parameter $\delta$.

Let $P$ be a poset of size $n$. We first associate to $P$ a layering of antichains. For each $a\in P$, let $r(a)$ be the maximum length of a chain in $P$ whose largest
element is $a$. Let $h$ be the size of the maximum chain in $P$. For each $1\leq r\leq h$, define 
\begin{align*}
    L_r=\{a\in P:r(a)=r\}
\end{align*}
to be the set of elements whose longest chain ending at them has length exactly $r$. Then for $1\leq r\leq h$, the layer $L_r$ is an antichain and it partitions $P$ into $P=L_1\cup L_2\cup\cdots\cup L_h$. Moreover, if $a\in L_i$,
$b\in L_j$, and $i<j$, then we cannot have $b<_P a$.

We now form an auxiliary partition $P=W_1\cup\ldots\cup W_k$ as follows. Consider a total ordering $\prec$ of the elements of $P$ such that $L_1\prec\ldots\prec L_h$, i.e., if $a\in L_i$, $b\in L_j$ and $i<j$, then $a\prec b$. For each $1\leq i \leq h$, we choose the ordering of the elements inside $L_i$ arbitrarily. We split the elements of $P$ into $k$ consecutive blocks $W_1,\ldots,W_k$ of sizes as equal as possible, respecting the ordering $\prec$. Thus, it holds that
\begin{align}\label{eq:sizeW_i}
        \left\lfloor\frac{n}{k}\right\rfloor\leq |W_i|\leq \left\lceil \frac nk\right\rceil
\end{align}
for every $1\leq i \leq k$. Moreover, by respecting the ordering $\prec$, we have the following important property: 
\begin{align}\label{eq:star_property}
\text{If $i<j$, $a\in W_i$, $b\in W_j$, and $a$ and $b$ are
comparable in $P$, then $a<_P b$.}
\end{align}

We apply Theorem~\ref{thm:refined_regularity} to the comparability graph $G_P$, with
regularity parameter $\eps$ and initial partition
$W_1\cup\cdots\cup W_k$. We then obtain an $\eps$-regular partition $V(G_P)=V_0\cup V_1\cup\cdots\cup V_t$,
where $k\leq t\leq M$, for some constant $M:=M(\eps,k,k)$. The sets $V_1,\ldots,V_t$ have the same size, and for every $i\in[t]$ there exists a unique
$\omega(i)\in[k]$ such that $V_i\subseteq W_{\omega(i)}$.

We now define a coloring of the pairs $[t]^{(2)}$. For $i\neq j$, color the pair $\{i,j\}$ in one of the following four colors:
\begin{itemize}
    \item[\tiny$\bullet$] \emph{Red} if the pair $(V_i,V_j)$ is not $\eps$-regular;
    \item[\tiny$\bullet$] \emph{White} if the pair $(V_i,V_j)$ is $\eps$-regular and $d(V_i,V_j)<\delta$;
    \item[\tiny$\bullet$] \emph{Grey} if the pair $(V_i,V_j)$ is $\eps$-regular and $\delta\leq d(V_i,V_j)\leq 1-\delta$;
    \item[\tiny$\bullet$] \emph{Black} if the pair $(V_i,V_j)$ is $\eps$-regular and $d(V_i,V_j)> 1-\delta$.
\end{itemize}
Let $R_{\rm grey}:=R_{\rm grey}(P)$ be the graph on
$[t]$ whose edges are the grey pairs. An immediate consequence of Corollary~\ref{cor:triangle-free} and our choice of $\eps$, $\delta$ and $n$ is the following observation.

\begin{claim}\label{claim:grey_triangle_free_main}
The graph \(R_{\rm grey}\) is triangle-free.
\end{claim}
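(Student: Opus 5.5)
We argue by contradiction. Suppose $R_{\rm grey}$ contains a triangle on indices $i,j,l\in[t]$, so each of the pairs $(V_i,V_j)$, $(V_j,V_l)$, $(V_i,V_l)$ is $\eps$-regular in $G_P$ with density in $[\delta,1-\delta]$. We will apply Corollary~\ref{cor:triangle-free} to the induced subgraph $G:=G_P[V_i\cup V_j\cup V_l]$ with the vertex partition $V_i\cup V_j\cup V_l$. Regularity and density of a pair depend only on the edges between its two parts, so $G$ inherits the same regular pairs with the same densities.

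The only hypothesis that needs checking is the size condition $|V_i|=|V_j|=|V_l|\geq n_0(\delta)$, where $n_0(\delta)$ is the constant from Corollary~\ref{cor:triangle-free}. By \ref{exceptionalvtx} and \ref{equitablepart}, each nonexceptional part has size
\[
|V_i|=\frac{n-|V_0|}{t}\geq \frac{(1-\eps)n}{M},
\]
where $M=M(\eps,k,k)$ depends only on $\eps$ and $k$, and hence only on $\eta$ through the hierarchy $n^{-1}\ll\eps\ll\delta\ll k^{-1}\ll\eta$. We therefore take $n$ large enough that $(1-\eps)n/M\geq n_0(\delta)$. We also fix $\eps\leq\eps_0(\delta)$ from the corollary, which the paper's choice of parameters already arranges. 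This size bound is the one step requiring care: it amounts to ensuring that $n$ sits at the bottom of the hierarchy relative to $M$, not just relative to $\eps$.

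With these hypotheses in place, Corollary~\ref{cor:triangle-free} shows that $G$ contains an induced copy of $F$. Since $G$ is an induced subgraph of $G_P$, the graph $G_P$ also contains $F$ as an induced subgraph. This contradicts the classical fact (Gilmore--Hoffman) that comparability graphs contain no induced $F$.

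For completeness, we can also verify that fact directly, since it is the only external input. Let $v_1v_2v_3$ be a triangle in $G_P$. Its three elements are pairwise comparable, so they form a chain; say after relabeling $v_a<v_b<v_c$. Let $w$ be the pendant vertex attached to the middle element $v_b$, so $w$ is comparable to $v_b$ and incomparable to $v_a$ and $v_c$. If $w<v_b$, then $w<v_b<v_c$ gives $w<v_c$ by transitivity, which is a contradiction. If $w>v_b$, then $v_a<v_b<w$ gives $v_a<w$, again a contradiction. Hence no induced $F$ exists in $G_P$, and $R_{\rm grey}$ is triangle-free.
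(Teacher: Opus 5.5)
Your proof is correct and follows the paper's own argument: a grey triangle gives three pairwise $\eps$-regular pairs of medium density, so Corollary~\ref{cor:triangle-free} produces an induced copy of $F$ in $G_P$, which is impossible in a comparability graph. The paper calls this immediate from its choice of parameters and cites Gilmore--Hoffman; your size check $|V_i|\geq (1-\eps)n/M\geq n_0(\delta)$ and your direct chain argument for $F$-freeness just make those implicit steps explicit.
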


Next, we classify all possible colorings and structure obtained from different posets $P$ through the regularity discussion above. A \emph{template} is a tuple $X=(t,\ell,\omega,\chi)$, where each parameter corresponds to the following:
\begin{itemize}
    \item[\tiny$\bullet$] The integer $t$ corresponds to the size of the $\eps$-partition and $k\leq t \leq M$;
    \item[\tiny$\bullet$] The size of $V_0$ is $|V_0|=\ell$ and $0\leq \ell \leq \eps n$;
    \item[\tiny$\bullet$] The function $\omega:[t]\rightarrow [k]$ is the function such that for each $i\in [t]$, the element $\omega(i)\in [k]$ is the unique index such that $V_i\subseteq W_{\omega(i)}$;
    \item[\tiny$\bullet$] The coloring $\chi:[t]^{(2)}\to \{\mathrm{red},\mathrm{white},\mathrm{grey},\mathrm{black}\}$ is a coloring of the pairs $[t]^{(2)}$ into the four colors described earlier.
\end{itemize}
Clearly, the discussion above gives us a natural map from a poset $P$ to a template $X:=X(P)$. Let $\mathcal{X}$ be the set of all templates. Since $k$ and
$M$ are chosen to be constants, then the number of possible templates is at most
\begin{equation}\label{eq:number_templates}
        |\mathcal{X}|\leq (\eps n+1)\sum_{k\leq t\leq M} k^t 4^{\binom{t}{2}}
        \leq 2^{\eta n/10}
\end{equation}
for sufficiently large $n$.

We are now able to describe the bijection $\psi_P:P\rightarrow [n]$ to be used in our abstract construction. Fixed a $P \in \mathcal{P}_n$, the partitions $\{W_1,
\ldots, W_k\}$, $\{V_0,\ldots,V_t\}$ and template $X:=X(P)=(t,\ell,\omega,\chi)$, we define a bijection $\overline{\omega}:[t]\rightarrow [t]$ with the following two properties:
\begin{itemize}
    \item[\tiny$\bullet$] If $\omega(i)<\omega(j)$, then $\overline{\omega}(i)<\overline{\omega}(j)$.
    \item[\tiny$\bullet$] If $\omega(i)=\omega(j)$ and $i<j$, then $\overline{\omega}(i)<\overline{\omega}(j)$.
\end{itemize}
Note that given $\omega$, the map $\overline{\omega}$ is unique. We now define a partition $[n]=I_0\cup \ldots\cup I_t$ such that $|I_0|=\ell$ and $|I_1|=\ldots=|I_t|=m:=(n-\ell)/t$ and with the property that $I_i<I_j$ for every $i<j$. That is, $I_0,\ldots, I_t$ are consecutive intervals of $[n]$ such that $I_i$ has size $|V_i|$ for $0\leq i \leq t$. The bijection is now defined as an arbitrary map such that $\psi_P(V_0)=I_0$ and $\psi_P(V_i)=I_{\overline{\omega}(i)}$ for $0\leq i \leq t$. That is, if $\overline{\omega}(i)<\overline{\omega}(j)$, then $\psi_P(V_i)<\psi_P(V_j)$. Since the order inside each $V_i$ is chosen arbitrarily, the map $\psi_P$ is not unique, but it has the following properties:
\begin{enumerate}[label=(R\arabic*)]
    \item\label{A:order} If $\overline{\omega}(i)<\overline{\omega}(j)$, then $\psi_P(V_i)<\psi_P(V_j)$.
    \item\label{A:template_uniqueness} If $X(P)=X(P')$, then $\psi_P(V_i)=\psi_{P'}(V_i)$ for $0\leq i \leq t$.
\end{enumerate}
Property~\ref{A:template_uniqueness} is particularly important, since it says that the template is enough to determine the order up to the elements inside the $\eps$-regular partition.

Finally, we finish our preparation by defining an auxiliary graph $G_{\rm aux}:=G_{\rm aux}(P)$ on the vertex set $[n]$ as follows. Let $P$ be a poset with corresponding initial partition $\{W_1, \ldots, W_k\}$, $\eps$-regular partiton $\{V_0,\ldots, V_t\}$ and bijection $\psi_P$. A pair $\{x,y\}\in [n]^{(2)}$ is an edge of $G_{\rm aux}$ if at least one of the following
conditions holds:
\begin{enumerate}[label=(S\arabic*)]
    \item\label{B:block} The elements $\psi_P^{-1}(x)$ and $\psi_P^{-1}(y)$ lie in the part $W_j$, for $j\in [k]$;
    \item\label{B:exceptional} At least one of the vertices $\psi_P^{-1}(x),\psi_P^{-1}(y)$ lies in $V_0$;
    \item\label{B:red} If $\psi_P^{-1}(x)\in V_i$, $\psi_P^{-1}(y)\in V_j$, and the $\{i,j\}$ is of color red;
    \item\label{B:white} If $\psi_P^{-1}(x)\in V_i$, $\psi_P^{-1}(y)\in V_j$, the pair $\{i,j\}$ is of color white, and $\{\psi_P^{-1}(x),\psi_P^{-1}(y)\}\in G_P$;
    \item\label{B:black} If $\psi_P^{-1}(x)\in V_i$, $\psi_P^{-1}(y)\in V_j$, the pair $\{i,j\}$ is of color black, and $\{\psi_P^{-1}(x),\psi_P^{-1}(y)\}\notin G_P$.
\end{enumerate}
In a certain sense, the graph $G_{\rm aux}$ stores all pairs whose behaviour is not forced by the reduced
colored graph $R_{\rm grey}$ and by the order of the blocks $W_j$'s. The next claim shows that $G_{\rm aux}$ is a sparse graph. 

\begin{claim}\label{claim:aux_graph}
$e(G_{\rm aux})\leq 2 n^2/k$.
\end{claim}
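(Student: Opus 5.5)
We bound the number of edges coming from each of the five conditions \ref{B:block}--\ref{B:black} separately and sum the contributions. Since $\psi_P$ is a bijection, we may count pairs of elements of $P$ instead of pairs of $[n]$. We use the parameter hierarchy $n^{-1}\ll\eps\ll\delta\ll k^{-1}$ throughout. The goal is to show each contribution is at most a small multiple of $n^2/k$, with the dominant term coming from \ref{B:block}.

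\textbf{Step 1 (conditions \ref{B:block} and \ref{B:exceptional}).} By \eqref{eq:sizeW_i}, each $W_j$ has at most $\lceil n/k\rceil$ elements. Hence the number of pairs inside a common block is at most
\[
k\binom{\lceil n/k\rceil}{2}\le \frac{k}{2}\Big(\frac nk+1\Big)^2\le \frac{n^2}{2k}+n .
\]
Pairs meeting $V_0$ number at most $|V_0|\,n\le \eps n^2$.

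\textbf{Step 2 (condition \ref{B:red}).} There are at most $\eps t^2$ red pairs $\{i,j\}$. Each red pair contributes $|V_i||V_j|\le (n/t)^2$ vertex pairs, so the total is at most $\eps n^2$.

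\textbf{Step 3 (conditions \ref{B:white} and \ref{B:black}).} A white pair $\{i,j\}$ has $d(V_i,V_j)<\delta$. It therefore contributes fewer than $\delta|V_i||V_j|$ edges of $G_P$. A black pair has density greater than $1-\delta$, so it contributes fewer than $\delta|V_i||V_j|$ non-edges. Summing over all pairs $\{i,j\}$ gives at most $\delta\sum_{i<j}|V_i||V_j|\le \delta n^2/2$.

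\textbf{Step 4 (conclusion).} Adding the bounds gives
\[
e(G_{\rm aux})\le \frac{n^2}{2k}+n+2\eps n^2+\frac{\delta}{2} n^2 .
\]
Since $\eps\ll\delta\ll k^{-1}$ and $n$ is large, we have $n+2\eps n^2+\delta n^2/2\le \tfrac{3}{2}\,n^2/k$. This yields $e(G_{\rm aux})\le 2n^2/k$.

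There is no real obstacle here. The only care needed is in the bookkeeping: vertices of $V_i$ with $i\ge1$ that lie in a common $W_j$ are already counted under \ref{B:block}, and a pair may satisfy several conditions but is counted at most once, so overcounting only helps the upper bound. The rounding in \eqref{eq:sizeW_i} contributes only the negligible $O(n)$ term.
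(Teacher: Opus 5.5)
Your proof is correct and follows the paper's argument. Like the paper, you bound the contributions of (S1)--(S5) separately (the paper uses $n^2/k$, $\eps n^2$, $\eps n^2$, $\delta n^2$, $\delta n^2$) and sum them; your constants are slightly sharper, for instance you treat white and black pairs together to get $\delta n^2/2$. One harmless slip: $\frac{k}{2}\bigl(\frac nk+1\bigr)^2=\frac{n^2}{2k}+n+\frac k2$, so the middle inequality in Step 1 is off by $k/2$, but the final bound $k\binom{\lceil n/k\rceil}{2}\le \frac{n^2}{2k}+n$ still holds because $\binom{s}{2}=\frac{s(s-1)}{2}$ with $s-1<n/k$.
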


\begin{proof}
We analyze the contribution in each condition. By~\eqref{eq:sizeW_i} we have that the number of edges contributed by condition~\ref{B:block} is at most
\begin{align*}
    \sum_{j=1}^k\binom{|W_j|}{2}\leq \frac{1}{2}\sum_{j=1}^k|W_j|^2\leq \frac{n^2}{k}.
\end{align*}
Since $|V_0|\leq \eps n$, the number of edges contributed by condition~\ref{B:exceptional} is at most $\eps n^2$. The condition of $\eps$-regularity says that there are at most $\eps t^2$ red pairs. Hence, if $m=|V_1|=\ldots=|V_t|$, then the contribution of~\ref{B:red} is at most $\eps t^2m^2\leq \eps n^2$. By the definition, each pair $\{i,j\}$ that is white has density at most $\delta$. Therefore, the number of edges contributed by condition~\ref{B:white} is at most $\delta t^2 m^2\leq \delta n^2$. Similarly, each black pair has nonedge density at most $\delta$. Therefore, the number of edges contributed by condition~\ref{B:black} is at most $\delta n^2$. By our choice of $\eps, \delta$ and $k$, we obtain
\begin{align*}
        e(G_{\rm aux})\leq
        \left(\frac{1}{k}+2\eps+2\delta\right)n^2
        \leq \frac{2 n^2}{k}.
\end{align*}
This concludes the proof of the claim.
\end{proof}

We are now able to proof the main theorem.

\begin{proof}[Proof of Theorem \ref{thm:universal-bound}]
    As discussed before, in order to construct our universal poset of Section \ref{sec: poset construction}, we need for each $P \in \mathcal{P}_n$ to define a bijection from $P$ to $[n]$ and an auxiliary set $\varphi(P)\subseteq [n]$. Let $P\in \mathcal{P}_n$ be a fixed poset. Let $\{W_1,\ldots,W_k\}$ be the initial partition, $\{V_0,\ldots, V_t\}$ be the $\eps$-partition obtained in the discussion in the beginning of this section, and let $X:=X(P)$ be the template of $P$. We take the map $\psi_P$ defined earlier in the section to be the bijection from $P$ to $[n]$.

    We now focus on defining the auxiliary set $\varphi(P)\subseteq[n]$. We split the set into two parts. First, define 
\begin{align*}
        S_1:=S_1(P)=\left\{x \in [n]: \: \deg_{G_{\rm aux}}(x)\geq \frac{2n}{\sqrt{k}}\right\}
\end{align*}
to be the set of vertices of $G_{\rm aux}$ of high degree.
By Claim~\ref{claim:aux_graph}, we have
\begin{equation}\label{eq:S1_small}
        |S_1|\leq \frac{4n^2/k}{2n/\sqrt{k}}\leq \frac{2n}{\sqrt{k}}.
\end{equation}

To define the second part of the auxiliary set, we use the triangle-freeness  of the grey reduced graph. Let $i_{\star}\in[t]$ be a vertex of maximum degree in $R_{\rm grey}$. Our goal is to construct a set $S_2$ with $|S_2|\leq n/2$ and the following property for $i\in [t]$:
\begin{align}\label{eq:property_S2}
    \text{if \quad $\psi_P(V_i) \not\subseteq S_2$,\quad\quad\quad  then \quad\quad\quad $\deg_{R_{\rm grey}}(i)\leq t/2$}.
\end{align}
We split the construction in two cases:

\vspace{0.2cm}

\noindent \textbf{Case 1:} $\deg_{R_{\rm grey}}(i_\star)\geq t/2$.

\vspace{0.2cm}

In this case, write $W=N_{R_{\rm grey}}(i_\star)$ as the neighborhood of $i_{\star}$ in the grey reduced graph $R_{\rm grey}$. We define the set $S_2\subseteq [n]$ by  
\begin{align*}
    S_2:=S_2(P)=\bigcup_{i\notin W}\psi_P(V_i) .
\end{align*}
By Claim \ref{claim:grey_triangle_free_main}, the graph $R_{\rm grey}$ is triangle-free. This implies that $W$ is an independent set. Hence, for every $i \in W$, it holds that $\deg_{R_{\rm grey}}(i)\leq \big|[t]
    \setminus W\big|\leq t/2$. This implies
that property~\eqref{eq:property_S2} holds.

\vspace{0.2cm}

\noindent \textbf{Case 2:} $\deg_{R_{\rm grey}}(i_\star)< t/2$.

\vspace{0.2cm}

In this case we just take $S_2=\emptyset$ and property~\eqref{eq:property_S2} holds immediately since $i_\star$ is the vertex of maximum degree.

\vspace{0.2cm}

Finally, we set
\begin{align*}
    \varphi(P):=S=S_1\cup S_2.
\end{align*}

Let $\mathcal{S}=\{\varphi(P):\: P\in \mathcal{P}_n\}$ be the collection of the auxiliary sets. We first estimate the size of $\mathcal{S}$. We bound this size by the number of pairs $(S_1,S_2)$. By~\eqref{eq:S1_small}, we always have $|S_1|\leq 2n/\sqrt{k}$. Hence, by our choice of $k$, there are 
\begin{align}\label{eq:choices_S1}
    \sum_{i=0}^{2n/\sqrt{k}}\binom{n}{i}\leq \left(\frac{en}{2n/\sqrt{k}}\right)^{2n/\sqrt{k}}\leq 2^{\eta n/10}
\end{align}
choices of $S_1$ for sufficiently large $n$. 

To estimate the number of sets $S_2$ we analyze the templates in $\mathcal{X}$. Let $X=(t,\ell,\omega,\chi)\in \mathcal{X}$ be a fixed template and consider all the posets $P\in \mathcal{P}_n$ such that $X(P)=X$. We claim that $S_2$ is uniquely determined by $X$. Note that the grey graph $R_{\rm grey}$ is determined by the reduced coloring $\chi$. If we are in Case 2, then the set $S_2$ is completely determined since $S_2=\emptyset$. Otherwise, the set $S_2=\bigcup_{i\notin W} \psi_P(V_i)$, where $W$ is determined by $\chi$. However, by property~\ref{A:template_uniqueness}, the sets $\psi_P(V_i)$ are uniquely determined by $X$. Therefore, the set $S_2$ is also determined in this case, which completes the proof of the claim. Putting together with~\eqref{eq:number_templates} this gives at most $|\mathcal{X}|\leq 2^{\eta n/10}$
choices for the set $S_2$. Hence, we have that
\begin{align}\label{eq:size_cS}
    |\mathcal{S}|\leq 2^{\eta n/10}\cdot 2^{\eta n/10} \leq 2^{\eta n/5}.
\end{align}

It remains to bound the number of possible profiles. Fix a possible auxiliary set $S=S_1\cup S_2$, a vertex $x\in[n]$, and a template $X=(t,\ell,\omega,\chi)$. We want to  estimate the
number of possible pairs 
\begin{align*}
    \mathcal{T}_{S,x}^{X}=\{(U_x^P,D_x^P):\: P\in \mathcal{P}_n,\, \varphi(P)=S,\, x\in [n],\, X(P)=X\}
\end{align*}
that can arise from posets $P$ with auxiliary set $S$ and template $X$. We start by recalling that $S_1$ has size at most $2n/\sqrt{k}$. Hence, by~\eqref{eq:choices_S1}, for a fixed $S$ there are $2^{\eta n/10}$ choices of $S_1$ and $S_2$. Fix a choice of $S_1$ and $S_2$ for the remaining of the analysis. We split the proof into two cases depending whether $x$ lies in $S$ or not.

\vspace{0.2cm}

\noindent \textbf{Case A:} $x\notin S$.

\vspace{0.2cm}

In this case, it holds that $x\notin S_1$ and $x\notin S_2$. Recall, by definition that $x\notin S_1$ implies that $\deg_{G_{\rm aux}}(x)<2n/\sqrt{k}$. That is, the neighborhood of $x$ in $G_{\rm aux}$ is small. Fix a potential neighborhood $Z:=N_{G_{\rm aux}}(x)\subseteq [n]$ of $x$ and let $P$ run over all possible posets in $\mathcal{P}_n$ with auxiliary sets $S_1$ and $S_2$, template $X(P)=X$ and $Z$ as the neighborhood of $x$ in $G_{\rm aux}$. Note that this requires as well that we estimate the possible pairs $(U_x^P, D_x^P)$ by splitting into two parts: The number of pairs $(U_x^P\cap Z, D_x^P\cap Z)$ and $(U_x^P\cap Z^{\mathsf{c}}, D_x^P\cap Z^{\mathsf{c}})$. We start with the pairs $(U_x^P\cap Z, D_x^P\cap Z)$. Note that if $y\in Z$, then there are at most three possibilities of containment: either $y\in U_x^P$, $y\in D_x^P$ or $y\notin U_x^P\cup D_x^P$. Therefore, there are at most $3^{|Z|}$ different possibilities for the pair $(U_x^P\cap Z, D_x^P\cap Z)$.

We now estimate the number of pairs $(U_x^P\cap Z^{\mathsf{c}}, D_x^P\cap Z^{\mathsf{c}})$. Let $y\notin Z$. If $\psi^{-1}_P(x)\in V_0$, then by condition~\ref{B:exceptional} we have that $\deg_{G_{\rm aux}}(x)=n-1>2n/\sqrt{k}$. This contradicts the fact that $x\notin S_1$. Thus, we may assume that $\psi^{-1}_P(x)\in V_i$ for $i\neq 0$. Suppose that $\psi_P^{-1}(y)\in V_j$. Since $y\notin Z$, by conditions~\ref{B:block} and~\ref{B:red}, we have that $j\neq i$ and $\{i,j\}$ is not of color red. There are three possibilities for the color of $\{i,j\}$ and we analyze whether $y \in U_x^P$, $y\in D_x^P$ or $y\notin U_x^P\cup D_x^P$ for each case:

\vspace{0.2cm}

\noindent \textbf{Case A.1:} $\{i,j\}$ is of color white.

\vspace{0.2cm}

By condition~\ref{B:white} we have that $\{\psi^{-1}_P(x),\psi_P^{-1}(y)\} \notin G_P$. Hence, $\psi^{-1}_P(x)$ and $\psi^{-1}_P(y)$ are not comparable in $P$ and we have $y\notin U_x^P\cup D_x^P$.

\vspace{0.2cm}

\noindent \textbf{Case A.2:} $\{i,j\}$ is of color black.

\vspace{0.2cm}

By condition~\ref{B:black} we have that $\{\psi^{-1}_P(x),\psi^{-1}_P(y)\} \in G_P$. Moreover, by condition \ref{B:block}, we have that $\psi^{-1}_P(x)$ and $\psi^{-1}_P(y)$ lies in different classes of the partition $W_1\cup\ldots \cup W_k$. Hence, by~\eqref{eq:star_property}, whether $y\in U_x^P$ or $y\in D_x^P$ is completely determined.

\vspace{0.2cm}

\noindent \textbf{Case A.3:} $\{i,j\}$ is of color grey. 

\vspace{0.2cm}

In this case, it is possible that either $\{\psi^{-1}_P(x),\psi^{-1}_P(y)\}$ is an edge or a non-edge in $G_P$. In the latter case, we have that $y\notin U_x^P\cup D_x^P$. In the case that it is an edge, then by the same observation as in Case A.2, whether $y\in U_x^P$ or $y\in D_x^P$ is determined. Therefore, for a fixed $y$ in this case, there are only two possibilities of containment. 

Since $x\notin S_2$, by~\eqref{eq:property_S2}, the number of $j$ such that $\{i,j\}$ is grey is at most $t/2$. Moreover, the neighborhood $N_{R_{\rm grey}}(i)$ is determined by $\chi$ and by property~\ref{A:template_uniqueness} the set $C:=\bigcup_{j\in N_{R_{\rm grey}}(i)} \psi_P(V_j)$ is also determined by the template $X$. Hence, the total contribution over all possible $y$ in this case is at most $2^{|C|}\leq 2^{m\cdot t/2}\leq 2^{n/2}$.

\vspace{0.2cm}

Putting everything together, we have that the number of pairs $(U_x^P,D_x^P)$ if $Z$ and the partition $S=S_1\cup S_2$ are fixed is at most $3^{|Z|}\cdot 2^{n/2}$. By summing over all possible neighborhoods $Z$ and choices of $S_1$, we obtain
\begin{align}\label{eq:caseA}
    |\mathcal{T}_{S,x}^X|\leq 2^{\eta n/10}\cdot 2^{n/2}\cdot \sum_{z\leq 2n/\sqrt{k}} \binom{n}{z}3^{z}\leq 2^{n/2+\eta n/5}
\end{align}
for sufficiently large $n$.

\vspace{0.2cm}

\noindent \textbf{Case B:} $x\in S$.

\vspace{0.2cm}

Again, in this case let $P$ run over all the posets with auxiliary sets exactly $S_1$ and $S_2$ and template $X=(t,\ell, \omega, \chi)$. Since $x\in S$, the local profile only stores the pair $(U_x^P\cap S, D_x^P\cap S)$. By construction, the set $S_1$ has size at most $2n/\sqrt{k}$, while the set $S_2$ has size at most $n/2$. This implies that
\begin{align}\label{eq:size_S}
    |S|\leq |S_1|+|S_2|\leq n/2+ 2n/\sqrt{k}\leq n/2+\eta n/10
\end{align} 
for sufficiently large $n$. Suppose that $\psi_{P}^{-1}(x) \in W_a$ for some $a \in [k]$. Note that by property~\ref{A:template_uniqueness} the sets $\psi_P(V_i)$ are determined by the template $X$ for every $0\leq i \leq t$. In particular, the sets $\psi_P(V_i)$ are determined for $i\in \omega^{-1}(a)$, i.e., for every $V_i\subseteq W_a$ with $i\neq 0$. Hence, the set $\psi_{P}(W_a\setminus V_0)$ is determined by the template $X$. 

If $x\in \psi_P(V_i)$ for some $i\neq 0$, then the previous discussion implies that $a$ is determined by the template. Otherwise, if $x\in \psi_P(V_0)$, then the choice of $a$ depends on the poset $P$. There are $k$ possibilities for $a$. Fix one of them. Let $Z=\psi_P(W_a\cap V_0)\subseteq \psi_P(V_0)$. Since $\psi_P(V_0)$ is determined and $|V_0|=\ell$, there are $2^\ell$ possibilities for the set $Z$. Fix one of them. We split the vertices $y\in S$ into two possibilities. For each possibility, we will analyze whether $y\in U_x^P$, $y\in D_x^P$ or $y\notin U_x^P\cup D_x^P$.

\vspace{0.2cm}

\noindent \textbf{Case B.1:} $y\in S \setminus \psi_P(W_a)$.

\vspace{0.2cm}

This case is somewhat similar to case A.3. Note that since $y\notin \psi_P(W_a)$, then $\psi_P^{-1}(x)$ and $\psi_P^{-1}(y)$ lies in different classes of the partition $W_1\cup\ldots\cup W_k$. There are two cases: either $\{\psi^{-1}_P(x),\psi^{-1}_P(y)\}$ is a non-edge or an edge of $G_P$. In the former case, we have that $y\notin U_x^P\cup D_x^P$. Otherwise, in the case that it is an edge of $G_P$, we have by property~\eqref{eq:star_property} that whether $y\in U_x^P$ or $y\in D_x^P$ is completely determined. This implies that there are only two possibilities of containment. Hence, by~\eqref{eq:size_S} the total contribution in this case is at most $2^{|S\setminus \psi_P(W_a)|}\leq 2^{|S|}\leq 2^{n/2+\eta n/10}$.

\vspace{0.2cm}

\noindent \textbf{Case B.2:} $y\in S \cap \psi_P(W_a)$.

\vspace{0.2cm}

This case we do not make any restriction and all three possible containments are allowed. Hence, by~\eqref{eq:sizeW_i}, the contribution is at most $3^{|S\cap \psi_P(W_a)|}\leq 3^{|W_a|}\leq 3^{2n/k}\leq 2^{\eta n/10}$ for sufficiently large $n$.

\vspace{0.2cm}

Putting the two cases together, we have that the number of pairs $(U_x^P,D_x^P)$ for $a$ and $Z$ fixed and partition $S=S_1\cup S_2$ is at most $2^{n/2+\eta n/10}\cdot 2^{\eta n/10}\leq 2^{n/2+\eta n/5}$. By summing over all possible values of $a$, $Z$ and $S_1$, we obtain
\begin{align}\label{eq:caseB}
    |\mathcal{T}_{S,x}^X|\leq k\cdot 2^{\ell}\cdot 2^{\eta n/10}\cdot 2^{n/2+\eta n/5}\leq  2^{n/2+3\eta n/10+\eps n}\leq 2^{n/2+\eta n/3}.
\end{align}

By \eqref{eq:number_templates}, \eqref{eq:caseA} and \eqref{eq:caseB}, we conclude that for every possible auxiliary set $S$ and every $x\in[n]$,
\begin{equation}\label{eq:profile_bound_final}
        |\mathcal T_{S,x}|\leq \sum_{X\in \mathcal{X}} |\mathcal{T}_{S,x}^X|
        \leq |\mathcal{X}|\cdot 2^{n/2+\eta n/3}\leq 2^{n/2+\eta n/2}.
\end{equation}

We now apply Proposition~\ref{prop:construction_size}. By \eqref{eq:size_cS} and
\eqref{eq:profile_bound_final}, the universal poset $Q:=Q(\mathcal{P}_n,\varphi)$ constructed in
Section~\ref{sec: poset construction} satisfies
\begin{align*}
        |Q|\leq n\cdot |\mathcal{S}|\cdot \max_{S\in \mathcal{S}, x\in [n]}|\mathcal{T}_{S,x}|\leq n\cdot 2^{\eta n/5}\cdot 2^{n/2+\eta n/2}\leq 2^{n/2+\eta n}
\end{align*}
for sufficiently large $n$. This finishes the proof of Theorem~\ref{thm:universal-bound}.
\end{proof}

\section{Concluding remarks}

It is natural to conjecture that the bound in Theorem~\ref{thm:universal-bound} is not best possible. In view of the counting lower bound coming from the theorem of Kleitman and Rothschild~\cite{Kleitman1975Posets}, the natural target is $f(n)=2^{(1+o(1))n/4}$. There are some reasons to believe that this should be the correct order of magnitude. Alon~\cite{Noga2017asymptotically} proved that there exists an induced-universal graph on $(1+o(1))2^{n/4}$ vertices for the family of $n$-vertex bipartite graphs, and also showed that this bound is asymptotically optimal. Since every bipartite graph with bipartition $A\cup B$ can be viewed as a two-layered poset by declaring $a<_P b$ precisely when $a\in A$, $b\in B$, and $\{a,b\}$ is an edge, this result covers the problem at least in the two-layered case.

The next natural test case is the family of three-layered posets. Indeed, Kleitman and Rothschild~\cite{Kleitman1975Posets} proved that almost all posets are three-layered. Thus, an interesting intermediate problem is to determine whether the family of $n$-element three-layered posets admits a universal poset of size $2^{(1+o(1))n/4}$. A positive answer would already capture the typical structure of finite posets and would be a step toward the conjectured bound for $f(n)$.

%One possible strategy for obtaining a smaller universal poset than that of Theorem~\ref{thm:universal-bound} would be to develop an adjacency labeling scheme based on an edge-coloring technique similar to that of~\cite{Lozin2007Minimal}, combined with an application of the regularity lemma. The principal challenge, however, is to ensure that the resulting structure into which the family is embedded retains the properties of a partially ordered set.

%{\bf Update reference:}

%P. Bastide, C. Groenland, and R. Nenadov, Smaller universal posets [arXiv]
%Combinatorial Theory, accepted.
%\RG{The paper has been accepted but the reference is not available yet.}
%{\bf JB: "Combinatorial Theory, to appear. That is proper citation.}

%{\bf JB: it is a CS paper, still should be fully cited.}

%[8]: STOC 2021: Proceedings of the 53rd Annual ACM SIGACT Symposium on Theory of Computing
%Pages 1109 - 1117\\

\printbibliography

\end{document}